\newtheorem{theorem}{Theorem}[section]
\theoremstyle{definition}
\newtheorem{definition}[theorem]{Definition}
\theoremstyle{remark}
\newcommand{\Cyc}{{\mathcal C}}
\newcommand{\ZZ}{{\mathcal Z}}
\newcommand{\C}{\mathbb{C}}
\newcommand{\Z}{\mathbb{Z}}
\renewcommand{\P}{\mathbb{P}}
\newcommand{\PP}{\mathbb{P}}
\newcommand{\GG}{\mathbb{G}}
\def\G{{\mathcal G}}
\def\GG{{\mathcal G}}
\begin{document}
\title{The Incidence Correspondence and its associated maps in Homotopy}
\date{\today}
\author[L. \,E. Lopez]{Luis E. Lopez}
\address{Max-Planck-Institut f\"ur Mathematik\\
         Vivatsgasse 7\\
	 D-53111 Bonn \\
	 Germany
}
\curraddr{Max-Planck-Institut f\"ur Mathematik\\
         Vivatsgasse 7\\
	 D-53111 Bonn \\
	 Germany
}
\email{llopez@mpim-bonn.mpg.de}
\subjclass[2000]{Primary: 14N05; Secondary: 55R25}
\keywords{Incidence Correspondence, Chow Varieties, Cycles on Grassmannians}

\begin{abstract}
The incidence correspondence in the grassmannian which determines the tautological bundle 
defines a map between cycle spaces on grassmannians. These cycle spaces decompose canonically
into a product of Eilenberg-MacLane spaces. These decompositions and the associated maps are calculated up to homotopy.
\end{abstract}

\maketitle

\section{Introduction}

Let $\Cyc^p_d(\PP^n)$ denote the space of algebraic cycles of codimension $p$ and degree $d$ in $\PP^n$. This set can be given the
structure of an algebraic variety via the Cayley-Chow-Van der Waerden embedding which takes an irreducible cycle $X$ 
into $\Psi(X)$ where the Chow Form $\Psi$ is obtained from the following incidence correspondence:
$$\xymatrix{ & \{ (x,\Lambda) \in \PP^n \times \GG_{p-1}(\PP^n) \mid x \in \Lambda \} \ar[dl]_{\pi_1} \ar[dr]^{\pi_2} \\
\PP^n & &  \GG_{p-1}(\PP^n) \\
 & \Psi(X) = \pi_2\pi_1^{-1}(X)&
} 
$$
This map is then extended additively to the topological monoid $\Cyc^p(\PP^n)$ of all cycles and to its naive group completion $\ZZ^p(\PP^n)$. The cycle $\Psi(X)$ has 
codimension $1$ in the grassmannian, therefore $\Psi$ defines a map
$$
\Psi: \ZZ^p(\PP^n) \rightarrow \ZZ^1\left(\GG_{p-1}(\PP^n)\right)
$$
Moreover, if $B = \bigoplus B_d$ denotes the coordinate ring of the grassmannian in the Pl\"ucker embedding, then every irreducible hypersurface $Z$ of degree $d$ in the
grassmannian is given by an element $f \in B_d$ defined uniquely up to a constant factor (see \cite{GKZ}), this defines a grading on the space of hypersurfaces
in the grassmannian and with respect to this grading the map above preserves the degree.\\

The topology thus inherited defines in turn a topology in the space $\Cyc^p(\PP^n)$ of all codimension $p$ algebraic cycles in $\PP^n$ 
(c.f. \cite {FM} for this and other equivalent definitions)  The results presented here are of the following type:

\begin{theorem}
 The Chow Form map 
$$\Psi: \ZZ^p(\PP^n) \rightarrow \ZZ^1\left(\GG_{p-1}(\PP^n)\right)$$
can be represented with respect to canonical decompositions of the corresponding spaces in the following way:
$$
\xymatrix{ \prod_{j=0}^p K(\Z,2j) \ar[r]^-{p} & K(\Z,0) \times K(\Z,2)}
$$
where $p$ is the projection into the first two factors of the product
\end{theorem}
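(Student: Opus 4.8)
The plan is to determine $\Psi$ up to homotopy from its effect on $\pi_0$ and on $\pi_2$ alone, since these are all that a map into $K(\Z,0)\times K(\Z,2)$ can record. On the source I use $\ZZ^p(\PP^n)\simeq\prod_{j=0}^p K(\Z,2j)$: Lawson's complex suspension theorem identifies $\ZZ^p(\PP^n)=\ZZ_{n-p}(\PP^n)$ with $\ZZ_0(\PP^p)$, and then Dold--Thom produces the product of Eilenberg--MacLane spaces. On the target I use $\ZZ^1(\GG_{p-1}(\PP^n))\simeq K(\Z,0)\times K(\Z,2)$: the degree-$d$ effective divisors form the linear system $\PP(B_d)$, and since $B_d\neq0$ with $\dim B_d\to\infty$ the monoid $\coprod_d\PP(B_d)$ group-completes to $\Z\times\C P^\infty$. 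Because $\Psi$ is a continuous homomorphism of topological abelian groups it is determined up to homotopy by its restriction to the identity component together with the map it induces on $\pi_0$ (the stray translation term lands in the path-connected space $K(\Z,2)$ and is hence homotopically invisible); and on the identity component $H^1(K(\Z,2j);\Z)=0$ for $j\geq1$ while $H^2(K(\Z,2j);\Z)=0$ for $j\geq2$, so K\"unneth gives $H^2(\prod_{j=1}^p K(\Z,2j);\Z)\cong\Z$, carried by the $j=1$ factor. Hence it suffices to show that $\Psi$ is the identity on $\pi_0$ and an isomorphism on $\pi_2$.

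The claim on $\pi_0$ is immediate: $\Psi$ preserves degree and $\Psi(X)$ is a divisor of the same degree $d$ as $X$, so $\Psi$ is the identity $\Z\to\Z$ on the $K(\Z,0)$ factors.

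For $\pi_2$ I would restrict $\Psi$ to the degree-one stratum. The effective codimension-$p$ degree-one cycles in $\PP^n$ are exactly the linear spaces $\PP^{n-p}$, so $\Cyc^p_1(\PP^n)=\GG_{n-p}(\PP^n)$, and the incidence description sends $L=\PP^{n-p}$ to the Schubert divisor $\{\Lambda:\Lambda\cap L\neq\emptyset\}$, a hyperplane section of $\GG_{p-1}(\PP^n)$ in its Pl\"ucker embedding, i.e.\ a point of $\Cyc^1_1(\GG_{p-1}(\PP^n))=\PP(B_1)$. Identifying $\GG_{p-1}(\PP^n)=G(p,n+1)$ and $\GG_{n-p}(\PP^n)=G(n+1-p,n+1)$, the incidence condition $U\cap W\neq 0$ (with $\dim U=p$ and $\dim W=n+1-p$) is equivalent to $\omega_U\wedge\omega_W=0$; so $L\mapsto\Psi(L)$ is, via the canonical pairing $\Lambda^{n+1-p}\C^{n+1}\otimes\Lambda^p\C^{n+1}\to\Lambda^{n+1}\C^{n+1}$, precisely the Pl\"ucker embedding $\GG_{n-p}(\PP^n)\hookrightarrow\PP(B_1)$ followed by a linear isomorphism. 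In particular it is a closed linear embedding pulling $\mathcal O(1)$ back to the ample generator of $\mathrm{Pic}(\GG_{n-p}(\PP^n))\cong\Z$, hence an isomorphism on $H_2\cong\pi_2$. Finally I would contemplate the square whose horizontals are the two incarnations of $\Psi$ and whose verticals are the inclusions $\GG_{n-p}(\PP^n)=\Cyc^p_1(\PP^n)\hookrightarrow\ZZ^p(\PP^n)$ and $\PP(B_1)=\Cyc^1_1(\GG_{p-1}(\PP^n))\hookrightarrow\ZZ^1(\GG_{p-1}(\PP^n))$: the right vertical is, up to the equivalences above, the standard inclusion $\PP^{N}\hookrightarrow\C P^\infty$, an isomorphism on $\pi_2$, while the left vertical carries a pencil of $(n-p)$-planes onto the bottom generator of $\pi_2(\ZZ^p(\PP^n))$. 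Chasing the square, $\Psi$ is the identity $\Z\to\Z$ on $\pi_2$, which together with the previous paragraph identifies $\Psi$ with the projection onto the $K(\Z,0)\times K(\Z,2)$ factors.

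The step I expect to be the genuine obstacle is the last assertion about the left vertical: that the inclusion of the degree-one stratum $\GG_{n-p}(\PP^n)$ into $\ZZ^p(\PP^n)$ is a $\pi_2$-isomorphism realizing the bottom Eilenberg--MacLane factor by a pencil of linear subspaces. Establishing this amounts to tracking the generator of the low-dimensional Lawson homology through the complex suspension isomorphism and the Dold--Thom identification, rather than quoting $\ZZ^p(\PP^n)\simeq\prod_{j=0}^p K(\Z,2j)$ as a black box; everything else in the argument is formal or classical.
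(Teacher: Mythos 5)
Your overall strategy is sound and genuinely different from the paper's, but as written it has one real gap --- precisely the step you flag. Everything else (the identification of the target with $\Z\times\C P^\infty$ via the linear systems $\PP(B_d)$, the reduction of a homomorphism into $K(\Z,0)\times K(\Z,2)$ to its effect on $\pi_0$ and on $H^2$ of the identity component, the degree argument on $\pi_0$, and the identification of $\Psi$ on the degree-one stratum with the Pl\"ucker embedding $\GG_{n-p}(\PP^n)\hookrightarrow\PP(B_1)$) is correct, modulo standard citations for the homotopy type of the naive group completions. But the assertion about the left vertical --- that $\GG_{n-p}(\PP^n)=\Cyc^p_1(\PP^n)\hookrightarrow\ZZ^p(\PP^n)$ is an isomorphism on $\pi_2$ landing on the $j=1$ factor --- is the one genuinely nontrivial input of your argument: without it the square chase only shows that $\Psi$ on $\pi_2$ agrees with whatever the left vertical does, and quoting $\ZZ^p(\PP^n)\simeq\prod_{j=0}^p K(\Z,2j)$ as a black box gives no hold on which class the degree-one stratum carries. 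The claim is true, and it is filled by exactly the machinery the paper runs on: by Lima-Filho the inclusion $\ZZ_{n-p}(\PP^n)\hookrightarrow\mathfrak{Z}_{2(n-p)}(\PP^n)$ is a homotopy equivalence, so by Almgren $\pi_2(\ZZ^p(\PP^n))\cong H_{2(n-p)+2}(\PP^n;\Z)$, and under this identification a pencil of $(n-p)$-planes through a fixed $(n-p-1)$-plane (a line in $\GG_{n-p}(\PP^n)$, which generates its $\pi_2$) sweeps out an $(n-p+1)$-plane, the generator; alternatively, quote Lawson--Michelsohn: the inclusion $\GG^p(\PP^n)\subset\ZZ^p(\PP^n)$ represents the total Chern class, whose degree-two component $c_1$ is a $\pi_2$-isomorphism. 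Either way the filling argument is of the same nature as the Lawson-homology bookkeeping you postponed, so the step is not optional.

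For comparison, the paper never restricts to degree-one strata. It invokes Lima-Filho's theorem that the F-M cycle map is compatible with flat pullbacks and proper pushforwards, so that the effect of $\Psi=\pi_{2*}\circ\pi_1^*$ on homotopy groups is computed by the corresponding map $\tilde\Psi$ on singular homology, and then evaluates $\tilde\Psi$ on generating planes: for $m=0$ the class of an $(n-p)$-plane maps to the special Schubert divisor $\sigma_1$, and for $m=2$ the class of an $(n-p+1)$-plane maps to $\sigma_0$, giving the isomorphisms on $\pi_0$ and $\pi_2$ in one stroke. Your route trades that functoriality input for an explicit Pl\"ucker computation on the degree-one stratum, which is more geometric and makes the line-bundle interpretation transparent; but the stratum-inclusion step you left open is where the same cycle-map (or suspension-theorem) argument must reappear, so as submitted the proof is incomplete at its crux.
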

hence, the Chow Form map can be interpreted as the classifying map of a (non trivial) line bundle in the space of 
algebraic cycles in $\P^n$, 
the class of this line bundle generates
$H^2\left(\ZZ^1\left(\GG_{p-1}(\PP^n)\right)\right)$. \\

An equivalent way of looking at the theorem is via the chain of inclusions 
$$
\GG^p(\PP^n) \subset \ZZ^p(\PP^n) \subset \ZZ^1\left(\GG_{p-1}(\PP^n)\right)
$$

The first space classifies $p$-dimensional bundles (not all of them), the second space classifies all total integer cohomology classes
in $\bigoplus_{j=0}^p H^{2j}(-;\Z)$ and the third space classifies all  integer cohomology classes in $H^0(-;\Z) \times H^{2}(-;\Z)$. The corresponding
maps associated to the inclusions are the total chern class map (see \cite{LM}) and the projection into the first two factors. \\ 


\section{The Chow Form Map}
In this section we prove the theorem regarding the chow form map using an explicit description of the map. In order to
state the results and its proofs we recall some definitions and facts about Lawson Homology. A survey of these and
other related results is given in \cite{lawsurvey}.

\begin{definition}
Let $X$ be a projective variety. The {\em Lawson Homology  groups} $L_pH_n(X)$ of $X$ are defined by
$$
L_pH_n(X) := \pi_{n-2p}\left(\ZZ_p(X)\right)
$$
where $\ZZ_p(X)$ is the naive group completion of the Chow monoid $\Cyc_p(X)$ of all $p$-dimensional
effective algebraic cycles in $X$
\end{definition}

These groups stand between the group of algebraic cycles modulo algebraic equivalence ${\mathcal A}_p(X) = L_pH_{2p}(X)$ and the
singular homology group $H_{n}(X) = L_0H_n(X)$. Friedlander and Mazur defined a {\em cycle map} between the Lawson Homology groups and
the singular homology groups
$$
s^{(p)}_X:L_pH_n(X) \rightarrow H_n(X;\Z)
$$
this map will be referred to as the F-M map. \\

The following results of Lima-Filho \cite{L-FCyclemap} will be used throughout the section.

\begin{theorem}[Lima-Filho, \cite{L-FCyclemap}]
\label{smaps}
The F-M cycle map coincides with the composition
$$
\xymatrix{
L_pH_n(X) = \pi_{n-2p}(\ZZ_{n-p}(X)) \ar[r]^-{e_{*}} & \pi_{n-2p}({\mathfrak Z}_{2p}(X)) \ar[r]^-{{\mathcal A}} & H_n(X; \Z)
}
$$
where $e_{*}$ is the map induced by the inclusion
$$e: \ZZ_m(X) \rightarrow \mathfrak{Z}_{m}(X)$$ 
of the space of algebraic cycles into the space of all  integral currents and ${\mathcal A}$ is the Almgren isomorphism
defined in \cite{Almgren}. In particular, the F-M is functorial and is compatible with proper push-forwards and flat-pullbacks of cycles.
\end{theorem}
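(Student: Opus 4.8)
The plan is to identify the Friedlander--Mazur map $s^{(p)}_X$ with $\mathcal{A}\circ e_*$ by tracing both through the Dold--Thom theorem and the Lawson suspension theorem, and then to deduce functoriality formally. Recall that $s^{(p)}_X$ is the $p$-fold iterate of the Friedlander--Gabber operation $s\colon L_qH_n(X)\to L_{q-1}H_n(X)$, which is induced by a natural map of cycle spaces built from the linear join/projection structure, while $\mathcal{A}\circ e_*$ sends a continuously varying family of algebraic cycles to the family of integral currents given by integration over the multiplicity-weighted smooth loci, and records the homology class of the resulting flat cycles via Almgren's isomorphism $\pi_k(\mathfrak{Z}_{2p}(X))\cong H_{2p+k}(X;\Z)$. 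I would start from $p=0$: here $\ZZ_0(X)$ and $\mathfrak{Z}_0(X)$ are topological abelian groups with homotopy groups $H_*(X;\Z)$, by Dold--Thom and by Almgren respectively, $e$ is a weak equivalence, and $\mathcal{A}$ agrees with the Dold--Thom isomorphism because Almgren's map is constructed from the same simplicial/polyhedral chain model; so $\mathcal{A}\circ e_* = s^{(0)}_X$ is the identity on $H_n(X;\Z)$.

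For general $p$ the aim is to run both sides through a common reduction. For $X\hookrightarrow\PP^N$, the Lawson suspension theorem gives a zig-zag of homotopy equivalences (algebraic suspensions) relating $\ZZ_p(X)$ to $\ZZ_0$-type data compatibly with the $s$-operation; geometrically, algebraic suspension corresponds to the join with a linear subspace, under which Almgren's isomorphism transforms by the suspension isomorphism in singular homology, and the projection/divisor maps correspond to slicing of currents. The crux is therefore to check that $e$ carries the algebraic structural maps (suspension, projection) to the corresponding current-theoretic maps (join, slicing), i.e.\ that $e$ induces a morphism between the two towers of fibrations computing $\pi_*(\ZZ_p(X))$ and $\pi_*(\mathfrak{Z}_{2p}(X))$. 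Granting this, an induction on $p$ whose inductive step is the commutativity of the square relating $s$ on the algebraic side to the current boundary/slicing map on the geometric side gives $s^{(p)}_X=\mathcal{A}\circ e_*$.

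The functoriality assertion then follows formally: $e$ commutes with proper push-forward (push-forward of currents restricts to push-forward of algebraic cycles) and with flat pull-back, and Almgren's isomorphism is natural with respect to continuous maps of ambient spaces and with respect to these operations on currents; hence $s^{(p)}_X$ is functorial and compatible with proper push-forwards and flat pull-backs.

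I expect the main obstacle to be precisely the compatibility asserted in the second paragraph: reconciling the Friedlander--Lawson moving lemmas, which make the suspension and projection maps behave well on spaces of algebraic cycles, with the Federer--Fleming compactness theorem and Almgren's construction, which identify homotopy classes of families of integral currents with chain-level homology data. Concretely, one must show that a family of algebraic cycles, regarded as a family of integral currents, represents exactly the homology class predicted by the algebraic $s$-operation --- a statement at the interface of genericity of algebraic intersections and the deformation theory of currents, where the real work lies.
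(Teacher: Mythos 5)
The paper does not prove this statement at all: it is quoted as a theorem of Lima-Filho, with the proof living entirely in \cite{L-FCyclemap}, so there is no internal argument of the paper to compare yours against. Judged on its own terms, your outline is a sensible roadmap and points in the same general direction as the literature: verify the case $p=0$ by matching Almgren's isomorphism with the Dold--Thom identification on $\ZZ_0(X)\subset\mathfrak{Z}_0(X)$, then propagate through the Friedlander--Mazur $s$-operation using the Lawson suspension theorem, and deduce functoriality from naturality of $e$ and $\mathcal{A}$.

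As a proof, however, it has genuine gaps, and they sit exactly where you yourself locate ``the real work''. First, the claim that $\mathcal{A}$ agrees with the Dold--Thom isomorphism ``because Almgren's map is constructed from the same simplicial/polyhedral chain model'' is an assertion, not an argument: Almgren's isomorphism $\pi_k(\mathfrak{Z}_{m}(X))\cong H_{m+k}(X;\Z)$ is produced by a deformation and polyhedral approximation procedure in the flat topology, and identifying its restriction along $e$ with the Dold--Thom isomorphism on $0$-cycles requires a concrete comparison of the two constructions. Second, and more seriously, the inductive step is precisely the content of the theorem rather than something one can ``grant'': you never say what the current-theoretic counterpart of the $s$-operation is, why the inclusion $e$ intertwines the algebraic suspension/projection maps (which involve the homotopy-theoretic group completion of the Chow monoids and the Lawson suspension theorem, with its moving arguments) with cone/slicing operations on integral currents, or why $\mathcal{A}$ transforms by the homology suspension under those operations. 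Without those compatibilities being proved, the induction does not get off the ground. So the proposal is a plausible outline of how such a proof could be organized, but the statement here should be treated as what the paper treats it as --- a cited result of Lima-Filho --- unless those two compatibility steps are actually carried out.
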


\begin{theorem}[Lima-Filho, \cite{L-FCyclemap}]
\label{inclusion}
If $X$ is a projective variety with a cellular decomposition in the sense of Fulton, (i.e., $X$ is an algebraic
cellular extension of $\emptyset$) then the inclusion
$$\ZZ_p(X) \hookrightarrow \mathfrak{Z}_{2p}(X)$$
into the space $\mathfrak{Z}_{2p}(X)$ of integral currents is a homotopy equivalence
\end{theorem}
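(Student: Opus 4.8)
The plan is to prove the statement by a cell-by-cell induction, regarding $Y\mapsto\ZZ_p(Y)$ and $Y\mapsto\mathfrak{Z}_{2p}(Y)$ as two functors on projective varieties linked by the natural inclusion $e$, and showing $e$ to be a homotopy equivalence. The structural ingredient that makes the induction run is a \emph{localization fibration} for each functor: if $Y\subset X$ is a closed subvariety with Zariski-open complement $U=X\setminus Y$, one has homotopy fibration sequences
$$
\ZZ_p(Y)\to\ZZ_p(X)\to\ZZ_p(U),\qquad
\mathfrak{Z}_{2p}(Y)\to\mathfrak{Z}_{2p}(X)\to\mathfrak{Z}_{2p}(U),
$$
where both right-hand terms are the spaces of cycles on $X$ modulo those supported on $Y$. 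The first sequence is the localization theorem in Lawson homology (see \cite{lawsurvey}); the second follows from the Federer--Fleming compactness and deformation theorems together with Almgren's identification \cite{Almgren} of the homotopy groups of a current space with the corresponding (relative, hence Borel--Moore on $U$) homology. By the functoriality and flat-pullback compatibility recorded in Theorem~\ref{smaps}, the inclusion $e$ respects both sequences, so we obtain a map of fibration sequences.

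Since $X$ is an algebraic cellular extension of $\emptyset$, choose a filtration $\emptyset=X_0\subset X_1\subset\cdots\subset X_N=X$ with each $X_i\setminus X_{i-1}$ a disjoint union of affine spaces. Suppose inductively that $\ZZ_p(X_{i-1})\to\mathfrak{Z}_{2p}(X_{i-1})$ is a homotopy equivalence. Applying the five lemma to the long exact homotopy sequences of the map of fibrations for the pair $(X_i,X_{i-1})$ — working in each case componentwise, which is harmless because all the spaces in sight are topological abelian groups — reduces the inductive step to proving that $e$ is a homotopy equivalence on $X_i\setminus X_{i-1}$, hence, by additivity over disjoint unions, on a single affine space $\mathbb{A}^m$.

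For this base case I would compute both sides. On the algebraic side, Lawson's complex suspension theorem gives $\ZZ_p(\PP^m)\simeq\ZZ_0(\PP^{m-p})$, the algebraic Dold--Thom theorem identifies $\ZZ_0(\PP^{m-p})\simeq\prod_{j=0}^{m-p}K(\Z,2j)$, and feeding this into the localization fibration $\ZZ_p(\PP^{m-1})\to\ZZ_p(\PP^m)\to\ZZ_p(\mathbb{A}^m)$ — in which the first map is an isomorphism on homotopy below degree $2(m-p)$ — collapses all but the top factor, so $\ZZ_p(\mathbb{A}^m)\simeq K(\Z,2(m-p))$. On the current side, Almgren's isomorphism gives $\pi_k\mathfrak{Z}_{2p}(\mathbb{A}^m)\cong H_{2p+k}(\PP^m,\PP^{m-1})=\widetilde H_{2p+k}(S^{2m})$, so $\mathfrak{Z}_{2p}(\mathbb{A}^m)\simeq K(\Z,2(m-p))$ as well. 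It then remains to see that $e$ is an \emph{isomorphism} on the single nonzero homotopy group $\pi_{2(m-p)}\cong\Z$; this is a generator-to-generator assertion, which one pins down either by checking that $e$ commutes with the complex suspension isomorphism and invoking the classical case $p=0$ (where $e$ is the standard comparison of the Dold--Thom model with the space of integral $0$-currents), or directly from the observation that on both sides the generator is carried by the relative fundamental class of a linear $\mathbb{A}^m\subset\PP^m$, which $e$ manifestly preserves.

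I expect this base case to be the main obstacle: making the localization fibration on the current side precise (pinning down the fiber as the currents supported on the lower-dimensional set $Y$) and, above all, verifying that $e_{*}$ is a genuine isomorphism — not merely a monomorphism or the inclusion of a finite-index subgroup — on $\pi_{2(m-p)}(\mathbb{A}^m)$. Once the two localization fibrations and their compatibility under $e$ are secured, the remainder is a routine diagram chase with long exact homotopy sequences.
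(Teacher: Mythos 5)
This theorem is not proved in the paper at all: it is quoted as a known result of Lima-Filho \cite{L-FCyclemap}, so there is no internal argument to compare against. Your sketch essentially reconstructs the strategy of the cited source: a cell-by-cell induction, localization fibration sequences for both $\ZZ_p$ and the integral-current groups, a map of fibrations induced by $e$, the five lemma, and a base-case computation on an affine cell via the complex suspension theorem, Dold--Thom, and Almgren's theorem; that skeleton is sound, and your computation $\ZZ_p(\mathbb{A}^m)\simeq K(\Z,2(m-p))\simeq\mathfrak{Z}_{2p}(\mathbb{A}^m)$ is correct. What the write-up leaves genuinely open --- and this is exactly where the weight of Lima-Filho's proof lies --- is: (a) that the quotient map $\mathfrak{Z}_{2p}(X)\to\mathfrak{Z}_{2p}(X)/\mathfrak{Z}_{2p}(Y)$ is a fibration whose base has the homotopy of Borel--Moore homology of $U$; this is not literally in Almgren or Federer--Fleming and needs a local cross-section/deformation argument; (b) that the algebraic quotient $\ZZ_p(X_i)/\ZZ_p(X_{i-1})$ depends, up to homotopy, only on the open stratum and not on how it is compactified --- you use this tacitly when you replace a cell of the filtration by the standard pair $(\PP^m,\PP^{m-1})$, and it is a separate theorem (Lawson homology for quasiprojective varieties); and (c) the generator-to-generator check on $\pi_{2(m-p)}$, which you rightly flag as the crux; of your two routes, the one tracing the relative fundamental class through the current of integration of a linear cycle is the one that closes cleanly, whereas compatibility of $e$ with complex suspension is itself a nontrivial claim. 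Finally, invoking Theorem~\ref{smaps} to get compatibility of $e$ with the two localization sequences is unnecessary (and, since that statement comes from the same source, has a circular flavor): all the induction needs is that $e$ is a continuous homomorphism carrying cycles supported on $Y$ to currents supported on $Y$, which is immediate.
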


We also recall some facts about the cohomology of the Grassmannian. We follow the notation of \cite{Fulton} Chapter $14$.

\begin{definition}
\label{schubert}
Let $\GG_k(\P^n)$ be the Grassmann variety of $k$-dimensional linear spaces in $\P^n$. $\GG_k(\P^n)$ is a smooth algebraic
variety of complex dimension $d :=(k+1)(n-k)$.
The {\em special Schubert classes} are the homology classes $\sigma_m \in H_{2(d-m)}(\GG_k(\P^n))$ defined by the cycle class of
$$
\sigma_m := \{ L \in \GG_k(\P^n) \mid L \cap A \neq \emptyset \}
$$
where $A$ is any linear subspace of $\P^n$ of codimension $k+m$.
\end{definition}

\begin{theorem}
The integral cohomology of the grassmannian $\GG_k(\P^n)$ is  generated by the Poincare duals $c_m(Q)$ of the
special Schubert classes $\sigma_m$. These Poincare duals are the chern classes of the universal quotient bundle $Q$.
\end{theorem}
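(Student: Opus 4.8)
The plan is to deduce this from the classical Schubert calculus on $\GG_k(\P^n)$. First I would fix a complete flag $A_\bullet : A_0 \subset A_1 \subset \cdots \subset A_n = \P^n$ and recall that the Schubert varieties $\Omega_\lambda(A_\bullet)$ attached to it, indexed by partitions $\lambda$ contained in the $(k+1)\times(n-k)$ rectangle, give a decomposition of $\GG_k(\P^n)$ into locally closed subvarieties, each isomorphic to an affine space of the appropriate dimension. Because every such cell has even real dimension, the associated cellular chain complex has vanishing differentials, so $H_*(\GG_k(\P^n);\Z)$ is free abelian on the Schubert classes $[\Omega_\lambda]$, and dually $H^*(\GG_k(\P^n);\Z)$ is free on their Poincar\'e duals $\sigma_\lambda$. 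The special Schubert classes $\sigma_m$ of Definition \ref{schubert} are precisely the $\sigma_\lambda$ attached to the one-part partitions $\lambda = (m)$, $1 \le m \le n-k$.

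Next I would identify the Poincar\'e dual of $\sigma_m$ with $c_m(Q)$. Write $0 \to S \to \underline{\C^{n+1}} \to Q \to 0$ for the tautological exact sequence on $\GG_k(\P^n)$, where $\mathrm{rank}\,S = k+1$ and $\mathrm{rank}\,Q = n-k$. A linear subspace $A \subset \P^n$ of codimension $k+m$ corresponds to a linear subspace $W \subseteq \C^{n+1}$ of dimension $n-k-m+1$, and the condition $L \cap A \neq \emptyset$ defining $\sigma_m$ is exactly the failure of the composite bundle map $\underline{W} \to \underline{\C^{n+1}} \to Q$ to be fibrewise injective. This is a degeneracy locus of expected codimension $(n-k)-(n-k-m+1)+1 = m$, and the Thom--Porteous formula evaluates its class as $c_m(Q - \underline{W}) = c_m(Q)$, since the trivial bundle $\underline{W}$ has trivial total Chern class. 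Equivalently one may verify this by induction from the Whitney formula applied to the tautological sequence, or simply cite \cite{Fulton}, Chapter 14.

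Finally, to show that $\sigma_1,\ldots,\sigma_{n-k}$ generate $H^*(\GG_k(\P^n);\Z)$ as a ring, I would invoke Pieri's formula, which expands $\sigma_m \cdot \sigma_\lambda$ as a sum of Schubert classes $\sigma_\mu$ with $\mu$ obtained from $\lambda$ by adding a horizontal strip of $m$ boxes. An induction on $|\lambda|$ then produces Giambelli's determinantal identity expressing an arbitrary $\sigma_\lambda$ as the determinant of a matrix whose entries are special Schubert classes, hence as an integer polynomial in $\sigma_1,\ldots,\sigma_{n-k}$; since the $\sigma_\lambda$ form a $\Z$-basis of the cohomology, this yields the generation statement. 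The main obstacle is the geometric content of Pieri's rule — the transversality argument showing that a general translate of a special Schubert variety meets a given Schubert variety in the predicted cycle — since the remaining steps are formal manipulations with partitions and Chern classes.
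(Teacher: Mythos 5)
Your argument is correct: the even-dimensional Schubert cell decomposition gives the free $\Z$-basis of Schubert classes, the degeneracy-locus/Thom--Porteous (or Whitney-formula) computation identifies the Poincar\'e dual of $\sigma_m$ with $c_m(Q)$, and Pieri--Giambelli yields generation. Note that the paper offers no proof of this statement at all --- it is recalled as classical background with the pointer to \cite{Fulton}, Chapter 14 --- and your write-up is precisely the standard argument found there, so there is nothing to reconcile.
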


\begin{theorem} \label{chowform}
 The Chow Form map 
$$\Psi: \ZZ^p(\PP^n) \rightarrow \ZZ^1\left(\GG_{p-1}(\PP^n)\right)$$
can be represented with respect to canonical decompositions of the corresponding spaces in the following way:
$$
\xymatrix{ \prod_{j=0}^p K(\Z,2j) \ar[r]^-{p} & K(\Z,0) \times K(\Z,2)}
$$
where $p$ is the projection into the first two factors of the product
\end{theorem}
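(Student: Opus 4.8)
The plan is to identify both sides of the map with explicit products of Eilenberg–MacLane spaces and then track what $\Psi$ does on homotopy groups. First I would recall the Dold–Thom / Lawson-theoretic computation of the homotopy type of the cycle spaces involved. By Theorem~\ref{inclusion}, since $\PP^n$ and the Grassmannian $\GG_{p-1}(\PP^n)$ both admit algebraic cellular decompositions, the inclusions $\ZZ^p(\PP^n) = \ZZ_{n-p}(\PP^n) \hookrightarrow \mathfrak{Z}_{2(n-p)}(\PP^n)$ and $\ZZ^1(\GG_{p-1}(\PP^n)) \hookrightarrow \mathfrak{Z}_{2(d-1)}(\GG_{p-1}(\PP^n))$ (with $d = p(n-p+1)$) are homotopy equivalences. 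Using the Almgren isomorphism (Theorem~\ref{smaps}) together with the fact that for a space with cellular homology the space of integral currents splits as a product of Eilenberg–MacLane spaces indexed by the cellular chain groups, I get canonical homotopy equivalences $\ZZ^p(\PP^n) \simeq \prod_{j=0}^{p} K(\Z, 2j)$ (one factor $K(\Z,2j)$ for each $H^{2j}(\PP^n;\Z) \cong \Z$, equivalently $H_{2(n-p)+2j}$, $j=0,\dots,p$) and $\ZZ^1(\GG_{p-1}(\PP^n)) \simeq \prod_{i} K(H^{2i}(\GG_{p-1}(\PP^n);\Z), 2i)$, whose relevant bottom part in the degrees $0$ and $2$ that the map can reach is $K(\Z,0) \times K(\Z,2)$ (since $H^0$ and $H^2$ of the Grassmannian are both $\Z$, generated by $1$ and $c_1(Q) = \sigma_1$).

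Next I would compute the effect of $\Psi$ on homotopy. The key observation is that $\Psi$ is a morphism of topological monoids that is degree-preserving with respect to the Plücker grading on hypersurfaces, so it is enough to understand it on a connected component and to compute the induced map on homotopy groups, which by the splitting above amounts to computing the induced map on homology/cohomology. On $\pi_0$: a degree-$d$ cycle of dimension $n-p$ in $\PP^n$ has a Chow form of Plücker degree $d$, so $\Psi$ induces multiplication by $1$ (the identity, suitably normalized) on $\pi_0 = H_0$, i.e., it is the identity on the $K(\Z,0)$ factors — this handles the first factor and shows the map is nonzero onto $K(\Z,0)$. On $\pi_2$ (equivalently on the relevant part of $H_2$): here I would argue that $\Psi$ sends the generator of $\pi_2\ZZ^p(\PP^n)$ — represented by a suitable rational curve in cycle space, e.g. the pencil of cycles $\{X_t\}$ interpolating between a fixed cycle and its translate — to the generator of $\pi_2 \ZZ^1(\GG_{p-1}(\PP^n))$ dual to $c_1(Q) = \sigma_1$. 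The cleanest way is via the functoriality of the F-M map under proper push-forward and flat pullback (Theorem~\ref{smaps}): the correspondence $\pi_2^{}{}_*\,\pi_1^*$ is an algebraic correspondence, so it commutes with the cycle maps, and one computes the induced map $H_{2(n-p)+2j}(\PP^n) \to H_{2(d-1)+2i}(\GG_{p-1}(\PP^n))$ on the classical homology of the incidence variety. That classical computation shows: the class $j=0$ maps to $i=0$ (the identity), the class $j=1$ maps to the class Poincaré dual to $c_1(Q)$, and — this is the content of "projection onto the first two factors" — the classes $j \geq 2$ map to $0$, because $\GG_{p-1}(\PP^n)$ has no cohomology in the relevant higher degrees produced by the correspondence, or more precisely the correspondence $\pi_2{}_*\pi_1^*$ kills them for dimension/degree reasons.

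The main obstacle I expect is the last point: showing that the higher factors $K(\Z,2j)$, $j \geq 2$, go to zero, and that the $j=0,1$ factors map isomorphically rather than by some nonzero multiple. For this I would compute the incidence correspondence $\pi_2{}_* \pi_1^* : H_*(\PP^n) \to H_*(\GG_{p-1}(\PP^n))$ explicitly on Schubert/Chern generators. Concretely, $\pi_1^*$ of a linear subspace $\PP^{n-p+j} \subset \PP^n$ is the incidence locus of flags, and $\pi_2{}_*$ of that locus is, by the standard Schubert calculus on the flag incidence variety $\{(x,\Lambda): x \in \Lambda\}$, a special Schubert cycle in $\GG_{p-1}(\PP^n)$; tracking degrees shows it lands in $H_0$ for $j=0$, in $H_2$ (dual to $\sigma_1$) for $j=1$, and in homological degrees $2(d-1)+2i$ with $i \geq 2$ for $j \geq 2$ — but the Chow form has codimension exactly $1$, so these would-be higher-codimension contributions are forced to vanish, leaving exactly the projection onto the $K(\Z,0) \times K(\Z,2)$ part. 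Once this homology computation is in hand, the homotopy statement follows since all spaces involved are products of Eilenberg–MacLane spaces and a map between such is determined up to homotopy by its effect on homotopy groups, i.e. on homology. I would finish by checking the $j=0$ and $j=1$ coefficients are $\pm 1$ using the explicit Plücker-degree bookkeeping already recalled, so that $p$ is genuinely the projection and not a multiple of it.
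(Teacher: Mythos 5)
Your overall strategy is the same as the paper's: identify both cycle spaces through Lima-Filho's equivalence with the space of integral currents plus Almgren's theorem, then use the compatibility of the Friedlander--Mazur map with flat pull-back and proper push-forward to reduce the computation of $\Psi$ on homotopy groups to the classical correspondence $\pi_{2*}\pi_1^{*}$ on singular homology, evaluated on linear generators. Two bookkeeping points, however, are wrong as written. First, $\ZZ^1\left(\GG_{p-1}(\PP^n)\right)$ is \emph{not} $\prod_i K\left(H^{2i}(\GG_{p-1}(\PP^n);\Z),2i\right)$ with the theorem's target appearing only as the ``reachable bottom part.'' By Almgren, $\pi_i\left(\ZZ_{d-1}(\GG_{p-1}(\PP^n))\right)\cong H_{i+2(d-1)}\left(\GG_{p-1}(\PP^n)\right)$ with $d=p(n-p+1)$, and these groups vanish for $i>2$ because the Grassmannian has real dimension $2d$; hence the target is exactly $K(\Z,0)\times K(\Z,2)$ and nothing more. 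This matters: with the correct indexing the vanishing of the factors $K(\Z,2j)$, $j\geq 2$, in the image is automatic, whereas in your version it is carried by the vague ``dimension/degree reasons'' clause, which is precisely the step that would require a genuine argument if the extra factors you posit actually existed.

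Second, you attach the Schubert classes to the wrong homotopy degrees. Under the Almgren identification, $\pi_0\left(\ZZ^1(\GG_{p-1}(\PP^n))\right)\cong H_{2d-2}$ is generated by $\sigma_1$ (the divisor class, Poincar\'e dual to $c_1(Q)$), while $\pi_2\cong H_{2d}$ is generated by the fundamental class $\sigma_0$. The correspondence sends the class of an $(n-p)$-plane (the $m=0$ generator) to $\sigma_1$, and the class of an $(n-p+1)$-plane (the $m=2$ generator) to $\sigma_0$ with multiplicity one, since a generic $(p-1)$-plane meets an $(n-p+1)$-plane in a single point; this is exactly what the paper verifies. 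Your claim that the generator of $\pi_2$ maps to the class ``dual to $c_1(Q)=\sigma_1$'' names the wrong class: $\sigma_1$ is hit on $\pi_0$, not on $\pi_2$. The conclusion --- isomorphisms on $\pi_0$ and $\pi_2$, hence the projection onto the first two factors --- survives once these identifications are corrected, and your Pl\"ucker degree-preservation argument on $\pi_0$ is a legitimate substitute for the paper's explicit $\sigma_1$ computation there.
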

\begin{proof}
The homotopy equivalences
$$\ZZ^p(\PP^n) \simeq \prod_{j=1}^p K(\Z,2j)$$
and 
$$
\ZZ^1\left(\GG_{p-1}(\PP^n)\right) \simeq K(\Z,0) \times K(\Z,2)
$$
are a consequence of theorem \ref{inclusion} and Almgren's theorem which asserts that
$$\pi_i(\mathfrak{Z}_k{X}) \cong H_{i+k}(X)$$
for the second homotopy equivalence, if $d=\dim_{\C} \GG_{p-1}(\PP^n)$ then 
\begin{equation} 
\pi_i\left(\ZZ_{d-1}\GG_{p-1}(\PP^n)\right) \cong 
\begin{cases}
H_{2d-1}\left(\GG_{p-1}(\PP^n)\right) \cong \Z & \text{if $i=0$}\\
H_{2d}\left(\GG_{p-1}(\PP^n)\right) \cong \Z& \text{if $i=2$}\\
H_{d-1+i}\left(\GG_{p-1}(\PP^n)\right) = 0 & \text{otherwise}
\end{cases}
\end{equation}
this last calculation being well known for grassmannians (see \cite{BottandTu}). Now we
use theorem \ref{smaps} to calculate the induced maps in homotopy. We have the following
commutative diagram:
$$
\xymatrix{
\pi_{m}(\ZZ_{n-p}(\PP^n))=L_{n-p}H_{m+2(n-p)}(\P^n) \ar[r]^-{\Psi} \ar[d]_-s &  \pi_{m}(\ZZ_{d-1}\GG_{p-1}(\PP^n)) \cong L_{d-1}H_{m+2(g-1)}(\GG_{p-1}(\PP^n)) \ar[d]^-s \\
H_{m+2(n-p)}(\P^n) \ar[r]^-{\tilde{\Psi}} & H_{m+2(d-1)}(\GG_{p-1}(\PP^n))
}
$$
where $\Psi$ is the Chow Form map and $\tilde{\Psi}$ is the corresponding map in the space of integral currents.

Theorem \ref{inclusion} implies that the vertical maps are isomorphisms.
Since we know the homology of the grassmannian, the only non-zero dimensions in the lower right correspond to the cases
$m=0$ and $m=2$. We will describe explicitly the morphism in these two cases.
\begin{itemize}
 \item[$m=0$] In this case $H_{2(n-p)}(\P^n)$ is generated by the class of an $(n-p)$-plane $\Lambda$ in $\P^n$. The cycle $\tilde{\Psi}(\Lambda)$ is then
$$\tilde{\Psi}(\Lambda)=\{ P \in \GG_{p-1}(\PP^n) \mid \Lambda \cap P \neq \emptyset \}$$
this is precisely definition \ref{schubert} of the special Schubert cycle $\sigma_1$.
 \item[$m=2$] In this case $H_{2 + 2(n-p)}(\P^n)$ is generated by the class of an $(n-(p-1))$-plane $\Lambda$ in $\P^n$. The cycle $\tilde{\Psi}(\Lambda)$ is then
$$\tilde{\Psi}(\Lambda)=\{ P \in \GG_{p-1}(\PP^n) \mid \Lambda \cap P \neq \emptyset \}$$
but by a dimension count this is the whole variety $\GG_{p-1}(\PP^n)$ again, this corresponds to  the special Schubert cycle $\sigma_0$

\end{itemize}

\end{proof}

The results of Lima-Filho about the F-M cycle class map may also be used to prove the following theorem of Lawson and Michelsohn which appeared in
\cite{LM}.

\begin{theorem}
 The complex join pairing in the cycle spaces 
$$\#:\ZZ^q(\P^n) \wedge \ZZ^{q'}(\P^m) \rightarrow \ZZ^{q+q'}(\P^{n+m+1})$$
represents the cup product pairing in the canonical decompositions
$$\cup: \prod_{s=0}^{q}K(\Z,2s) \wedge \prod_{t=0}^{q'}K(\Z,2t) \rightarrow  \prod_{r=0}^{q+q'}K(\Z,2r)$$
i.e. if $i_{2c}$ represents the generator of $H^{2c}(K(\Z,2c);\Z) \cong \Z$ then
$$(\#)^{*}(i_{2c}) = \sum_{\begin{aligned} a+b = c \\ a > 0 \\ b > 0 \end{aligned}}i_{2a} \otimes i_{2b}$$
\end{theorem}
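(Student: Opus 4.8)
The plan is to run the argument of Theorem~\ref{chowform} once more, now for the bilinear operation $\#$: replace every cycle space by a space of integral currents via Theorem~\ref{inclusion}, transport $\#$ to that setting where Theorem~\ref{smaps} identifies it with a pairing on singular homology, and then read off the answer from the classical behaviour of linear cycles under the complex join.

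First I would record, exactly as in the proof of Theorem~\ref{chowform}, the homotopy equivalences $\ZZ^q(\P^n)\simeq\prod_{s=0}^{q}K(\Z,2s)$, and similarly for $\P^m$ and $\P^{n+m+1}$, coming from Theorem~\ref{inclusion}, from Almgren's isomorphism (the $i$-th homotopy group of the space of $k$-dimensional integral currents on $X$ is $H_{i+k}(X)$), and from the fact that $H_{2r}(\P^N)\cong\Z$ is generated by the class of a linear $\P^r$. Under these identifications the generator of $\pi_{2a}(\ZZ^q(\P^n))$ is the class of a linear subspace $\P^{\,n-q+a}\subset\P^n$, and the fundamental class $i_{2c}\in H^{2c}(\ZZ^{q+q'}(\P^{n+m+1}))$ of the factor $K(\Z,2c)$ is, on the identity component, the unique primitive class restricting to a generator of the $H^{2c}$ of that factor, so it is pinned down by its evaluation against $\pi_{2c}$. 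Two observations then explain the shape of the formula and make $\#$ descend to the smash product: joining a cycle with the zero cycle gives the zero cycle, so $\#$ kills each axis $\ZZ^q(\P^n)\vee\ZZ^{q'}(\P^m)$, which forces $\#^{*}(i_{2c})$ to be a sum of external products of \emph{positive}-degree classes (hence the restriction $a,b>0$); and $\#$ multiplies degrees, $\deg(X\#Y)=\deg X\cdot\deg Y$, which is exactly compatible with the product of the two $K(\Z,0)$-factors.

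Next I would set up the square parallel to the one in the proof of Theorem~\ref{chowform}: on top the pairing $\#_{*}\colon \pi_{2a}(\ZZ^q(\P^n))\otimes \pi_{2b}(\ZZ^{q'}(\P^m))\to \pi_{2(a+b)}(\ZZ^{q+q'}(\P^{n+m+1}))$ induced by $\#$; on the bottom the pairing on the singular homology of the projective spaces induced by the complex join seen as an algebraic correspondence (the join $X\#Y$ is the image of a $\P^1$-bundle over $X\times Y$ under a proper map, so the push-forward compatibility in Theorem~\ref{smaps} applies); the vertical F--M maps are isomorphisms by Theorem~\ref{inclusion}. The bottom pairing is computed at once from $\P^{a'}\#\P^{b'}=\P^{a'+b'+1}$: it takes generator $\otimes$ generator to generator whenever the total degree is in range. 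Transporting back, $\#_{*}$ takes the generator of $\pi_{2a}\otimes\pi_{2b}$ to the generator of $\pi_{2(a+b)}$ for all $a+b\le q+q'$, so the pairing of $i_{2c}$ against these classes is $1$ on each $i_{2a}\otimes i_{2b}$ with $a+b=c$.

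Finally I would assemble: $\#^{*}(i_{2c})$ lies in $\bigoplus_{a+b=c,\,a,b>0}\tilde H^{2a}(\ZZ^q(\P^n))\otimes\tilde H^{2b}(\ZZ^{q'}(\P^m))$ by the vanishing on the axes, and its component along each $i_{2a}\otimes i_{2b}$ with $a+b=c$, $a,b>0$, equals $1$ by the homotopy computation; there is nothing else because in the degrees that occur the cohomology of the relevant generalised Eilenberg--MacLane spaces is generated by products of the fundamental classes and the only such products that are primitive of total degree $2c$ and survive in the smash are these $i_{2a}\otimes i_{2b}$. I expect the real obstacle to be precisely this last piece of bookkeeping: controlling $H^{*}(K(\Z,2s);\Z)$ integrally (torsion appears once $s\ge2$), making the identification of ``$i_{2c}$'' with an honest primitive class on the disconnected space $\ZZ^{q+q'}(\P^{n+m+1})$ precise, and verifying that no decomposable but non-tensor terms intrude. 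Once those points are settled, the identity $(\#)^{*}(i_{2c})=\sum_{a+b=c,\,a,b>0}i_{2a}\otimes i_{2b}$ drops out of the homotopy-level calculation.
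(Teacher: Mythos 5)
Your proposal takes essentially the same route as the paper: identify the homotopy of the cycle spaces with the singular homology of the projective spaces via Lima-Filho's results (Theorems \ref{smaps} and \ref{inclusion}), compute the induced pairing on generators from the fact that the join of linear subspaces is again a linear subspace, and read off $(\#)^{*}(i_{2c})$. The additional bookkeeping you flag at the end (vanishing on the axes, primitivity of $i_{2c}$, torsion in $H^{*}(K(\Z,2s);\Z)$) is not carried out in the paper either; its proof consists precisely of your commutative square and the homotopy-level computation.
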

\begin{proof} The linear join of two irreducible varieties $X \subset \P^n$ and $Y \subset \P^m$ with defining
ideals $\langle F_i \rangle \in \C[x_0,\ldots,x_n]$ and $\langle \G_j \rangle \in \C[y_0,\ldots,y_m]$ is the variety in $\P^{n+m+1}$ defined
by the ideal $\langle F_i,G_j \rangle \in \C[x_0,\ldots,x_n,y_0,\ldots,y_m]$. A synthetic construction of this pairing is given in
\cite{LM}. The join pairing $\#$ is obtained by extending bi-additively to all cycles and taking the induced pairing
in the smash product. The join pairing induces a bilinear pairing
$$ \pi_s(\ZZ^q(\P^n)) \times  \pi_r(\ZZ^{q'}(\P^m)) \rightarrow \pi_{r+s}(\ZZ^{q+q'}(\P^{n+m+1}))$$
which corresponds to a homomorphism
$$ \#_*:L_{n-q}H_{s+2(n-q)}(\P^n) \otimes L_{m-q'}H_{r+2(m-q')}(\P^m) \rightarrow L_{(n-q)+(m-q')+1}H_{s+2(n-q)+r+2(m-q')+2}(\P^{n+m+1})$$
This homomorphism is non-zero only when $s$ and $r$ are even, so we may assume $s=2a$ and $r= 2b$. Now we take the F-M map and we get the 
following commutative diagram
$$\xymatrix{
L_{n-q}H_{s+2(n-q)}(\P^n) \otimes L_{m-q'}H_{r+2(m-q')}(\P^m) \ar[r]^-{\#_*} \ar[d] &  L_{(n-q)+(m-q')+1}H_{s+2(n-q)+r+2(m-q')+2}(\P^{n+m+1}) \ar[d] \\
H_{s+2(n-q)}(\P^n) \otimes H_{r+2(m-q')}(\P^m) \ar[r]^-{\phi} & H_{s+2(n-q)+r+2(m-q')+2}(\P^{n+m+1})
}
$$
where we define $\phi$ on the generators and we extend it bilinearly, namely, if the cycle classes of the planes $\Lambda_1 \in H_{2a+2(n-q)}(\P^n)$ and
$\Lambda_2 \in H_{2b+2(m-q')}(\P^m)$ are generators of the corresponding groups, then $\phi([\Lambda_1] \otimes [\Lambda_2])=[\Lambda_1 \# \Lambda_2]$, this last class
is again a generator of $H_{2(a+b)+2(n-q)+2(m-q')+2}(\P^{n+m+1})$. The vertical F-M maps are isomorphisms by theorem \ref{inclusion}, so we get that the generator $i_{2c}$
gets pulled back precisely to the sum of the generators $i_{2a} \otimes i_{2b}$ with $a+b=c$.

\end{proof}

\section{Generalizations}

The proof of theorem \ref{chowform} suggests a generalization of the result. Instead of taking the Chow form map we will look at general correspondences.

\begin{definition}
Let $\Sigma_k$ denote the {\em incidence correspondence} defined by
$$\xymatrix{ & \Sigma_k := \{ (x,\Lambda) \in \PP^n \times \GG_{k}(\PP^n) \mid x \in \Lambda \} \ar[dl]_{\pi_1} \ar[dr]^{\pi_2} \\
\PP^n & &  \GG_{k}(\PP^n) \\
 & \Psi_k(X) = \pi_2\pi_1^{-1}(X)&
} 
$$
\end{definition}

The map $\pi_1$ is flat and the map $\pi_2$ is proper (see \cite{harris}). Therefore $\Psi_k$ is a well defined map of cycle spaces
$$\Psi_k: \Cyc^p(\P^n) \rightarrow \Cyc^{p-k}(\GG_k(\P^n)).$$

Notice that the incidence correspondence which defines the universal quotient bundle is $\Sigma_{p-1}$. \\

With this notation we have the following

\begin{theorem}
 The map
$$\Psi_k: \ZZ^p(\PP^n) \rightarrow \ZZ^{p-k}\left(\GG_{k}(\PP^n)\right)$$
can be represented with respect to canonical decompositions of the corresponding spaces in the following way
$$
\xymatrix{\prod_{j=0}^p K(\Z,2j) \ar[r]^-{p} & \prod_{r=0}^{p-k} \prod_{\alpha \in H_{2r}(\GG_k(\P^n))}K(\Z,2r)_{\alpha}}
$$
where $p$ is the projection into the factors corresponding to the classes $\sigma_0,\sigma_1,\ldots,\sigma_{(p-k)}$
\end{theorem}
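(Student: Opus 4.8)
The plan is to mimic the proof of Theorem \ref{chowform} verbatim, replacing the codimension-$1$ target by the more refined decomposition of $\ZZ^{p-k}(\GG_k(\P^n))$ dictated by the full homology of the Grassmannian. First I would record the canonical homotopy decompositions: by Theorem \ref{inclusion} together with Almgren's theorem, $\ZZ^p(\PP^n) \simeq \prod_{j=0}^{p} K(\Z,2j)$ as before, while
$$
\ZZ^{p-k}\left(\GG_k(\P^n)\right) \simeq \ZZ_{d-(p-k)}\left(\GG_k(\P^n)\right) \simeq \prod_{r \geq 0} \prod_{\alpha \in H_{2r}(\GG_k(\P^n))} K(\Z,2r)_\alpha,
$$
where $d = \dim_\C \GG_k(\P^n) = (k+1)(n-k)$ and the inner product runs over a chosen basis of $H_{2r}$ (the Schubert basis); concretely $\pi_i\left(\ZZ_{d-(p-k)}\GG_k(\P^n)\right) \cong H_{2(d-(p-k))+i}(\GG_k(\P^n))$, which is the free abelian group on the Schubert classes of the relevant dimension, and this is nonzero precisely for $i = 0, 2, 4, \ldots, 2(p-k)$. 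Since the top homotopy group occurs at $i = 2(p-k)$, the target factors through $\prod_{r=0}^{p-k}$ as claimed.

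Next I would set up the Friedlander--Mazur square exactly as in Theorem \ref{chowform}: functoriality of the F-M cycle map (Theorem \ref{smaps}) applied to the flat pullback $\pi_1^*$ and proper pushforward $\pi_{2*}$ gives a commutative diagram
$$
\xymatrix{
\pi_m(\ZZ_{n-p}(\P^n)) \ar[r]^-{\Psi_k} \ar[d]_-s & \pi_m(\ZZ_{d-(p-k)}\GG_k(\P^n)) \ar[d]^-s \\
H_{m+2(n-p)}(\P^n) \ar[r]^-{\tilde\Psi_k} & H_{m+2(d-(p-k))}(\GG_k(\P^n))
}
$$
with both verticals isomorphisms by Theorem \ref{inclusion}, and $\tilde\Psi_k = \pi_{2*}\pi_1^*$ the induced current map. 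So it suffices to identify $\tilde\Psi_k$ on the generator $[\Lambda] \in H_{2(n-p)+2s}(\P^n)$ of an $(n-p+s)$-plane $\Lambda$, for $0 \le s \le p-k$. By definition of $\Sigma_k$ this generator maps to the cycle class of $\pi_{2*}\pi_1^{-1}(\Lambda) = \{ L \in \GG_k(\P^n) \mid L \cap \Lambda \neq \emptyset\}$. A dimension count (the incidence variety over $\Lambda$ has dimension $k + (k+1)(n-k) - (p - s)$, mapping onto its image in $\GG_k$) shows this set, when it has the expected codimension $p-k-s$, is exactly the special Schubert cycle $\sigma_{p-k-s}$ of Definition \ref{schubert} with $A$ of codimension $k + (p-k-s)$ a complement of $\Lambda$; when the expected codimension is $\le 0$ it is all of $\GG_k(\P^n) = \sigma_0$. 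Thus under the decompositions, the $j=p-s$ factor $K(\Z,2(p-s))$ of the source maps isomorphically onto the $K(\Z,2(p-k-s))_{\sigma_{p-k-s}}$ factor of the target, and all other source factors map trivially; this is precisely the stated projection onto the coordinates indexed by $\sigma_0,\sigma_1,\ldots,\sigma_{p-k}$.

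The one genuinely new point over Theorem \ref{chowform} — and the step I expect to need the most care — is the Schubert-cycle identification: verifying that $\pi_{2*}\pi_1^{-1}(\Lambda)$ is reduced of the expected dimension and that its class is $\sigma_{p-k-s}$ rather than some multiple, and handling the degenerate range $s > p - k$ uniformly (there it lands in homotopy groups that vanish, so nothing is required, but one should say so). This is standard intersection theory on the Grassmannian; genericity of $\Lambda$ within its homology class is harmless since the F-M map only sees the class. Once the generator computation is in hand, matching it against the product decomposition and reading off the index set $\{\sigma_0,\ldots,\sigma_{p-k}\}$ is formal, just as in the codimension-$1$ case, where only $\sigma_0,\sigma_1$ survive.
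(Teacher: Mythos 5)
Your proposal is correct and takes essentially the same route as the paper: the identical Friedlander--Mazur commutative square with vertical isomorphisms from Theorem \ref{inclusion}, followed by identifying $\tilde\Psi_k$ on the class of an $(n-p+s)$-plane $\Lambda$ with the special Schubert cycle $\sigma_{p-k-s}$ (the paper's $\sigma_{p-(k+r)}$), your extra care about reducedness and the degenerate range being a welcome but inessential refinement. One small correction: in Definition \ref{schubert} the subspace $A$ can be taken to be $\Lambda$ itself, whose codimension is $p-s=k+(p-k-s)$, not a ``complement'' of $\Lambda$.
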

\begin{proof}
The argument follows  the proof of theorem \ref{chowform}. We take the homotopy groups and then we use
the cycle map. In this case we have the following commutative diagram:
$$
\xymatrix{
\pi_{m}(\ZZ_{n-p}(\PP^n))=L_{n-p}H_{m+2(n-p)}(\P^n) \ar[r]^-{\Psi_k} \ar[d]_-s &  \pi_{m}(\ZZ_{m}\GG_{k}(\PP^n)) \cong L_{d-(p-k)}H_{d+2[d-(p-k)]}(\GG_{k}(\PP^n)) \ar[d]^-s \\
H_{m+2(n-p)}(\P^n) \ar[r]^-{\tilde{\Psi_k}} & H_{m+2[d-(p-k)]}(\GG_{k}(\PP^n))
}
$$

Since the homology of the grassmannian is zero in odd degrees we are only concerned with the case $m=2r$. The lower right corner of the diagram
imposes the condition $0 \leq 2r \leq 2(p-k)$.  Once again, the definition of the special Schubert cycles implies that if $\Lambda_{2r + 2(n-p)}$ is
a plane of dimension $2r + 2(n-p)$ (i.e. the class of a generator of $H_{2r + 2(n-p)}(\P^n)$) then $\tilde{\Psi_k}(\Lambda_{2r + 2(n-p)}) = \sigma_{p-(k+r)}$
\end{proof}

\begin{bibsection}
\begin{biblist}
\bib{Almgren}{article}{
   author={Almgren, Frederick Justin, Jr.},
   title={The homotopy groups of the integral cycle groups},
   journal={Topology},
   volume={1},
   date={1962},
   pages={257--299},
   issn={0040-9383},
   review={\MR{0146835 (26 \#4355)}},
}

\bib{BottandTu}{book}{
   author={Bott, Raoul},
   author={Tu, Loring W.},
   title={Differential forms in algebraic topology},
   series={Graduate Texts in Mathematics},
   volume={82},
   publisher={Springer-Verlag},
   place={New York},
   date={1982},
   pages={xiv+331},
   isbn={0-387-90613-4},
   review={\MR{658304 (83i:57016)}},
}

\bib{FM}{article}{
   author={Friedlander, Eric M.},
   author={Mazur, Barry},
   title={Filtrations on the homology of algebraic varieties},
   note={With an appendix by Daniel Quillen},
   journal={Mem. Amer. Math. Soc.},
   volume={110},
   date={1994},
   number={529},
   pages={x+110},
   issn={0065-9266},
   review={\MR{1211371 (95a:14023)}},
}

\bib{Fulton}{book}{
   author={Fulton, William},
   title={Intersection theory},
   series={Ergebnisse der Mathematik und ihrer Grenzgebiete. 3. Folge. A
   Series of Modern Surveys in Mathematics [Results in Mathematics and
   Related Areas. 3rd Series. A Series of Modern Surveys in Mathematics]},
   volume={2},
   edition={2},
   publisher={Springer-Verlag},
   place={Berlin},
   date={1998},
   pages={xiv+470},
   isbn={3-540-62046-X},
   isbn={0-387-98549-2},
   review={\MR{1644323 (99d:14003)}},
}

\bib{GKZ}{book}{
   author={Gel{\cprime}fand, I. M.},
   author={Kapranov, M. M.},
   author={Zelevinsky, A. V.},
   title={Discriminants, resultants, and multidimensional determinants},
   series={Mathematics: Theory \& Applications},
   publisher={Birkh\"auser Boston Inc.},
   place={Boston, MA},
   date={1994},
   pages={x+523},
   isbn={0-8176-3660-9},
   review={\MR{1264417 (95e:14045)}},
}

\bib{lawsurvey}{article}{
   author={Lawson, H. Blaine, Jr.},
   title={Spaces of algebraic cycles},
   conference={
      title={Surveys in differential geometry, Vol.\ II},
      address={Cambridge, MA},
      date={1993},
   },
   book={
      publisher={Int. Press, Cambridge, MA},
   },
   date={1995},
   pages={137--213},
   review={\MR{1375256 (97m:14006)}},
}

\bib{L-FCyclemap}{article}{
   author={Lima-Filho, Paulo},
   title={On the generalized cycle map},
   journal={J. Differential Geom.},
   volume={38},
   date={1993},
   number={1},
   pages={105--129},
   issn={0022-040X},
   review={\MR{1231703 (94i:14027)}},
}

\bib{harris}{book}{
   author={Harris, Joe},
   title={Algebraic geometry},
   series={Graduate Texts in Mathematics},
   volume={133},
   note={A first course;
   Corrected reprint of the 1992 original},
   publisher={Springer-Verlag},
   place={New York},
   date={1995},
   pages={xx+328},
   isbn={0-387-97716-3},
   review={\MR{1416564 (97e:14001)}},
}
\bib{LM}{article}{
   author={Lawson, H. Blaine, Jr.},
   author={Michelsohn, Marie-Louise},
   title={Algebraic cycles, Bott periodicity, and the Chern characteristic
   map},
   conference={
      title={The mathematical heritage of Hermann Weyl},
      address={Durham, NC},
      date={1987},
   },
   book={
      series={Proc. Sympos. Pure Math.},
      volume={48},
      publisher={Amer. Math. Soc.},
      place={Providence, RI},
   },
   date={1988},
   pages={241--263},
   review={\MR{974339 (90d:14010)}},
}

\bib{FL}{article}{
   author={Friedlander, Eric M.},
   author={Lawson, H. Blaine, Jr.},
   title={A theory of algebraic cocycles},
   journal={Ann. of Math. (2)},
   volume={136},
   date={1992},
   number={2},
   pages={361--428},
   issn={0003-486X},
   review={\MR{1185123 (93g:14013)}},
}

\end{biblist}
\end{bibsection}

\end{document}